\documentclass[a4paper,12pt]{article}
\usepackage[english]{babel}
\usepackage{tikz}
\usetikzlibrary{matrix,arrows,decorations.markings}
\usepackage{amsmath,amsfonts,amssymb,amsthm,url,textcomp}
\usepackage{csquotes}
\usepackage{a4wide}
\usepackage[numbers]{natbib}
\usepackage{fancyhdr}
\pagestyle{fancy}
\lhead{Alexander Bors}
\rhead{A bound on element orders in the holomorph}
\usepackage{anyfontsize}
\usepackage{hyperref}
\usepackage{hhline}
\usepackage{leftidx}

\numberwithin{theorem}{section}

\newtheorem{theoremm}{Theorem}\numberwithin{theoremm}{subsection}
\newtheorem{deffinition}[theoremm]{Definition}
\newtheorem{lemmma}[theoremm]{Lemma}
\newtheorem{corrollary}[theoremm]{Corollary}
\newtheorem{propposition}[theoremm]{Proposition}
\numberwithin{theoremmm}{subsubsection}

\theoremstyle{remark}

\newcommand{\Rad}{\operatorname{Rad}}
\newcommand{\Aut}{\operatorname{Aut}}

\newcommand{\PSL}{\operatorname{PSL}}

\newcommand{\Aff}{\operatorname{Aff}}

\newcommand{\lcm}{\operatorname{lcm}}
\newcommand{\sh}{\operatorname{sh}}
\newcommand{\ord}{\operatorname{ord}}

\newcommand{\Hol}{\operatorname{Hol}}
\newcommand{\A}{\operatorname{A}}
\newcommand{\LL}{\operatorname{L}}

\newcommand{\mao}{\operatorname{mao}}
\newcommand{\maffo}{\operatorname{maffo}}

\newcommand{\Out}{\operatorname{Out}}

\newcommand{\GL}{\operatorname{GL}}

\newcommand{\rel}{\mathrm{rel}}

\newcommand{\cha}{\operatorname{char}}

\begin{document}

\title{A bound on element orders in the holomorph of a finite group}

\author{Alexander Bors\thanks{University of Salzburg, Mathematics Department, Hellbrunner Stra{\ss}e 34, 5020 Salzburg, Austria. \newline E-mail: \href{mailto:alexander.bors@sbg.ac.at}{alexander.bors@sbg.ac.at} \newline The author is supported by the Austrian Science Fund (FWF):
Project F5504-N26, which is a part of the Special Research Program \enquote{Quasi-Monte Carlo Methods: Theory and Applications}. \newline 2010 \emph{Mathematics Subject Classification}: 20C25, 20D05, 20D45. \newline \emph{Key words and phrases:} Finite groups, Holomorph (group theory), Element orders, Bound.}}

\date{\today}

\maketitle

\abstract{Let $G$ be a finite group. We prove a theorem implying that the orders of elements of the holomorph $\Hol(G)$ are bounded from above by $|G|$, and we discuss an application to bounding automorphism orders of finite groups.}

\section{Introduction}\label{sec1}

\subsection{Motivation and main results}\label{subsec1P1}

Holomorphs are frequently encountered in permutation group theory. For example, it is well-known that a permutation group $G$ acting on a set $X$ and having a regular normal subgroup $N$ is the internal semidirect product of $N$ and the point stabilizer $G_x$ for any $x\in X$, and since the conjugation action of $G_x$ on $N$ is faithful, one obtains a natural embedding $G\hookrightarrow\Hol(N)$. Three of the eight O'Nan-Scott types of finite primitive permutation groups (HA, HS and HC) are of this form.

Conversely, for any group $G$, $\Hol(G)$ admits a natural faithful permutation representation on (the underlying set of) $G$ in which the canonical copy of $G$ in $\Hol(G)$ is regular; this is by letting $\Hol(G)$ act on $G$ via what the author called \textit{affine maps} in \cite[Definition 2.1.1]{Bor15a}. In this action, the element $(x,\alpha)\in\Hol(G)$ corresponds to the permutation $\A_{x,\alpha}:G\rightarrow G$ sending $g\mapsto x\alpha(g)$. We denote the image of this permutation representation (i.e., the group of bijective affine maps of $G$) by $\Aff(G)$.

Our motivation for studying holomorphs of finite groups lies in the search for upper bounds on automorphism orders. By \cite[Lemma 2.1.4]{Bor15a}, we have the following: If $G$ is a group, $\alpha$ an automorphism of $G$, $x\in G$, $H$ an $\alpha$-invariant subgroup of $G$ and $gH$ a coset of $H$ such that $\A_{x,\alpha}[gH]\subseteq gH$, say $\A_{x,\alpha}(g)=gh_0$ with $h_0\in H$, then the action of $\A_{x,\alpha}$ on $gH$ is isomorphic (in the sense of an isomorphism of finite dynamical systems, see \cite[remarks after Definition 1.1.5]{Bor15a}) with the action of the bijective affine map $\A_{h_0,\alpha_{\mid H}}$ on $H$. Using this, we can prove:

\begin{propposition}\label{lcmProp}
Let $G$ be a finite group, $N \cha G$ and $A=\A_{x,\alpha}\in\Aff(G)$. Denote by $\tilde{A}$ the induced affine map on $G/N$. Then there exists a subset $M\subseteq N$ such that \[\ord(A)=\ord(\tilde{A})\cdot\lcm_{m\in M}{\ord(\A_{m,(\alpha_{\mid N})^{\ord(\tilde{A})}})}.\]
\end{propposition}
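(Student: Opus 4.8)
The plan is to reduce everything to the $\ord(\tilde{A})$-th power of $A$, which stabilizes each $N$-coset setwise, and then to invoke the quoted consequence of \cite[Lemma 2.1.4]{Bor15a} coset by coset.

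First I would set $d := \ord(\tilde{A})$ and record the composition rule for affine maps, $\A_{x,\alpha}\circ\A_{y,\beta}=\A_{x\alpha(y),\alpha\beta}$, from which one reads off both that $A^d=\A_{x_d,\alpha^d}$ for a suitable $x_d\in G$ (namely $x_d=x\alpha(x)\cdots\alpha^{d-1}(x)$) and, more importantly, that the assignment $A\mapsto\tilde{A}$ is compatible with taking powers, the induced map of $A^k$ being $\tilde{A}^k$. Since $A^{\ord(A)}=\id_G$ forces $\tilde{A}^{\ord(A)}=\id_{G/N}$, we get $d\mid\ord(A)$, whence $\ord(A^d)=\ord(A)/d$ and thus $\ord(A)=d\cdot\ord(A^d)$. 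Because $\tilde{A}^d=\id_{G/N}$, the permutation $A^d$ fixes every coset $gN$ setwise; consequently $A^d$ is the disjoint union of its restrictions to the cosets, and $\ord(A^d)=\lcm_{gN\in G/N}\ord\bigl((A^d)_{\mid gN}\bigr)$.

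Next I would analyze each restriction $(A^d)_{\mid gN}$. Since $N$ is characteristic it is $\alpha^d$-invariant, and writing $A^d(g)=gh_{0,g}$ with $h_{0,g}\in N$, the quoted lemma (applied to $A^d=\A_{x_d,\alpha^d}$, the subgroup $N$, and the coset $gN$) tells us that the action of $A^d$ on $gN$ is isomorphic, as a finite dynamical system, to the action of $\A_{h_{0,g},(\alpha^d)_{\mid N}}=\A_{h_{0,g},(\alpha_{\mid N})^d}$ on $N$. Isomorphic dynamical systems have identical cycle type and hence equal permutation order, so $\ord\bigl((A^d)_{\mid gN}\bigr)=\ord(\A_{h_{0,g},(\alpha_{\mid N})^d})$. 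Fixing a transversal of $N$ in $G$ and letting $M:=\{h_{0,g}: gN\in G/N\}\subseteq N$ be the corresponding set of shift elements, I obtain $\ord(A^d)=\lcm_{m\in M}\ord(\A_{m,(\alpha_{\mid N})^d})$.

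Combining the two displayed facts yields $\ord(A)=d\cdot\ord(A^d)=\ord(\tilde{A})\cdot\lcm_{m\in M}\ord(\A_{m,(\alpha_{\mid N})^{\ord(\tilde{A})}})$, as claimed. The genuinely load-bearing step is the passage to $A^d$: one must be sure that $d\mid\ord(A)$ (so that the factor $\ord(\tilde{A})$ comes out exactly rather than as a mere divisor) and that $A^d$ really does stabilize each coset, so that its order splits as an $\lcm$ over cosets; both follow from the power-compatibility of $A\mapsto\tilde{A}$. The remaining ingredient, that the cited lemma converts the restriction on a coset into a genuine affine map on $N$ with the same order, is exactly where the isomorphism of dynamical systems does its work, and care is needed only in checking that the choice of coset representative $g$ is immaterial, since different representatives give dynamically isomorphic, hence equal-order, affine maps on $N$.
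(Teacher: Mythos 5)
Your proof is correct and follows essentially the same route as the paper's: both pass to $A^{\ord(\tilde{A})}$, which stabilizes each $N$-coset, invoke the quoted consequence of \cite[Lemma 2.1.4]{Bor15a} to replace each restriction by an affine map of $N$ of the stated form, and take the least common multiple over cosets. You merely spell out more explicitly the power-compatibility of $A\mapsto\tilde{A}$ (which the paper dismisses with \enquote{clearly}); your closing worry about the choice of coset representative is unnecessary, since the proposition only asserts existence of \emph{some} $M$, so one fixed choice per coset suffices.
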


\begin{proof}
Clearly, $\ord(\tilde{A})\mid\ord(A)$, and $A^{\ord(\tilde{A})}$ restricts to a permutation on each coset of $N$ in $G$. By the remarks before this proposition, for each coset $C\in G/N$, we can fix an element $n_C\in N$ such that the action of $A^{\ord(\tilde{A})}$ on $C$ is isomorphic with the action of $\A_{n_C,\alpha_{\mid N}^{\ord(\tilde{A})}}$ on $N$. Set $M:=\{n_C\mid C\in G/N\}$. Then clearly, $\ord(A^{\ord(\tilde{A})})=\lcm_{m\in M}{\ord(\A_{m,\alpha_{\mid N}^{\ord(\tilde{A})}})}$, and the result follows.
\end{proof}

This led the author to studying the following function on finite groups:

\begin{deffinition}\label{fDef}
For a finite group $G$, define \[\mathfrak{F}(G):=\max_{\alpha\in\Aut(G)}\lcm_{x\in G}{\ord(\A_{x,\alpha})}.\]
\end{deffinition}

Clearly, $\mathfrak{F}(G)$ is an upper bound on the maximum element order in $\Hol(G)$, and thus both on the maximum element and maximum automorphism order of $G$. Our main result is the following upper bound on $\mathfrak{F}(G)$:

\begin{theoremm}\label{fTheo}
For any finite group $G$, we have $\mathfrak{F}(G)\leq|G|$. In particular, element orders in $\Hol(G)$ are bounded from above by $|G|$.
\end{theoremm}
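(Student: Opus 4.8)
The plan is to induct on $|G|$, using Proposition \ref{lcmProp} to peel off a characteristic subgroup and thereby reduce to the case where $G$ is characteristically simple. First I would establish the submultiplicativity bound: if $1 \neq N \cha G$ with $N \neq G$, then $\mathfrak{F}(G) \leq \mathfrak{F}(N)\cdot\mathfrak{F}(G/N)$. To see this, fix $\alpha \in \Aut(G)$; since $N$ is characteristic, $\alpha$ restricts to $\alpha_{\mid N} \in \Aut(N)$ and induces $\bar\alpha \in \Aut(G/N)$. For each $x \in G$ write $A = \A_{x,\alpha}$ and let $d_x = \ord(\tilde{A})$ be the order of the induced affine map on $G/N$. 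Proposition \ref{lcmProp} gives a set $M_x \subseteq N$ with $\ord(A) = d_x \cdot \lcm_{m \in M_x}\ord(\A_{m,(\alpha_{\mid N})^{d_x}})$, and the second factor divides $\mathfrak{F}(N)$ by Definition \ref{fDef}, since $(\alpha_{\mid N})^{d_x} \in \Aut(N)$. Hence $\ord(\A_{x,\alpha})$ divides $d_x\cdot\mathfrak{F}(N)$, so taking the lcm over $x$ and noting that $\lcm_x d_x = \lcm_{\bar x \in G/N}\ord(\A_{\bar x,\bar\alpha})$ divides $\mathfrak{F}(G/N)$, we get $\lcm_x \ord(\A_{x,\alpha}) \mid \mathfrak{F}(N)\cdot\mathfrak{F}(G/N)$; maximizing over $\alpha$ yields the claim. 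By the inductive hypothesis the right-hand side is at most $|N|\cdot|G/N| = |G|$, so the theorem reduces to characteristically simple $G$.

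A finite characteristically simple group is $G \cong T^k$ with $T$ simple, which splits the base case into $T$ cyclic of prime order (so $G \cong (\mathbb{Z}/p)^k$) and $T$ nonabelian. Throughout I would use the direct formula $\ord(\A_{x,\alpha}) = \ord(\alpha)\cdot\ord(\N_\alpha(x))$, where $\N_\alpha(x) = \prod_{i=0}^{\ord(\alpha)-1}\alpha^i(x)$; this follows from the composition rule $\A_{x,\alpha}^m = \A_{x_m,\alpha^m}$ with $x_m = x\alpha(x)\cdots\alpha^{m-1}(x)$, together with $x_{nj} = (x_n)^j$ for $n = \ord(\alpha)$.

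For the elementary abelian case, $\Aff(G)$ is the affine group $\operatorname{AGL}_k(\mathbb{F}_p)$ and $\Aut(G) = \GL_k(\mathbb{F}_p)$. Writing additively, $\N_\alpha$ is the linear map $\sum_{i=0}^{n-1}\alpha^i$ with $n = \ord(\alpha)$, so $\ord(\N_\alpha(x)) \in \{1,p\}$ and $\lcm_x\ord(\A_{x,\alpha})$ equals $n$ or $pn$, the latter exactly when $\N_\alpha \neq 0$. I would analyze this via the Fitting decomposition $\mathbb{F}_p^k = V_0 \oplus V_1$ for the operator $\mathrm{id}-\alpha$: on $V_0$, where $\mathrm{id}-\alpha$ is invertible, the norm vanishes, while on $V_1$, where $\alpha$ is unipotent, the norm is nonzero only when a Jordan block has size exactly a power of $p$ and carries the full $p$-part of $n$. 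Bounding the prime-to-$p$ part of $\ord(\alpha)$ by $\lcm_i(p^{d_i}-1) < p^k$ over the rational canonical blocks, and accounting carefully for the gained factor $p$, should yield $n, pn \leq p^k$. This linear-algebra computation is the main obstacle: the crude bounds $\ord(\alpha)\le\meo(\GL_k(\mathbb{F}_p))$ and a stray factor $p$ do not by themselves stay below $p^k$ (Singer cycles already force care), so one must show that the factor $p$ is gained only when $\ord(\alpha)$ is correspondingly small.

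For $T^k$ nonabelian, $\Aut(G) = \Aut(T)\wr S_k$, so every automorphism order is at most $\mao(T)$ times the maximal order of an element of $S_k$. Since $\ord(\N_\alpha(x)) \le \exp(T)$ and $|G| = |T|^k$, crude estimates already give $\lcm_x\ord(\A_{x,\alpha}) \le |G|$ once $k \ge 2$. The residual core is $k=1$, i.e. $G = T$ simple nonabelian, where $\mao(T)\cdot\exp(T)$ can exceed $|T|$ (already for $T = \Alt_5$), so the crude decoupling fails and one must genuinely exploit the interaction in $\ord(\alpha)\cdot\ord(\N_\alpha(x))$, together with the classification-based bounds on element and automorphism orders of finite simple groups. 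I expect this simple-group case, alongside the elementary abelian computation, to absorb most of the work, the reduction step being comparatively routine.
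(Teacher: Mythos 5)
Your reduction to characteristically simple groups is correct and is exactly the paper's route (it is Lemma \ref{csubLem}, proved from Proposition \ref{lcmProp} just as you do). The problem is everything after that: the two base cases that carry the actual content of the theorem are left as acknowledged open problems rather than arguments. For the elementary abelian case you set up the right framework ($\lcm_x\ord(\A_{x,\alpha})\in\{n,pn\}$ with $n=\ord(\alpha)$) but explicitly concede that you have not shown $pn\le p^k$ when the norm map is nonzero; the paper simply cites \cite[Lemma 4.3]{Bor15c} for this. For $T^k$ with $k\ge 2$ your claim that \enquote{crude estimates already give} the bound does not hold up: Horo\v{s}evski\u{\i} only gives $\mao(T^k)<|T|^k$, and combined with $\exp(T)\le|T|$ this overshoots $|T|^k$; the paper needs the nontrivial bound $\mao(S^n)<|S^n|^{0.438}$ from \cite[Lemma 3.4]{Bor15b} together with $\exp(S)\le|S|\le|S^n|^{0.5}$, which only closes because $n\ge 2$.

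The most serious gap is the case $G=T$ nonabelian simple, which you correctly identify as the residual core (noting that $\mao(\Alt_5)\cdot\exp(\Alt_5)=180>60$) but for which you offer no mechanism at all. The paper's mechanism is Lemma \ref{divisorLem}: every cycle length of $\A_{x,\alpha}$ is divisible by $\ord(\sh_\alpha(x))\cdot\prod_p p^{\nu_p(\ord(\alpha))}$ over the primes $p$ dividing both orders, and this quantity divides $|G|$. This yields the $p$-adic inequality $\nu_p(\ord(\alpha))+\nu_p(\ord(\sh_\alpha(x)))\le\nu_p(|G|)$ whenever $p$ divides both factors, which is precisely the \enquote{interaction} you say must be exploited. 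From it the paper derives Lemma \ref{lcmDivLem}: part (1) disposes of alternating and sporadic groups (and several small $\PSL$'s) because their Sylow $2$-subgroups are noncyclic and $\exp(\Out(S))\le 2$, and part (2) handles Lie type $S$ in defining characteristic $p$ by reducing to the checkable inequality $p^{2\lceil\log_p(d_p(S))\rceil}\cdot|\Out(S)|\le p^{\nu_p(|S|)}$, where $d_p(S)$ is the minimal faithful projective representation degree (Lemma \ref{lieTypeLem} plus the Kleidman--Liebeck tables), with a handful of exceptions settled by computer. Without this divisibility lemma, or some substitute for it, your outline cannot be completed for simple groups, so the proposal as it stands does not constitute a proof.
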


It is not difficult to show that $\mathfrak{F}(G)=|G|$ whenever $G$ is a finite cyclic or dihedral group, whence this upper bound is in general best possible.  We remark that it is known \cite[Theorem 2]{Hor74a} that the maximum automorphism order of a nontrivial finite group $G$ is bounded from above by $|G|-1$. Furthermore, we note that our proof of Theorem \ref{fTheo} will use the classification of finite simple groups (CFSG). Before tackling the proof, we note an easy consequence. For a finite group $G$, denote by $\mao(G)$ the maximum automorphism order of $G$, by $\maffo(G)$ the maximum order of a bijective affine map of $G$ (which coincides with the maximum element order in $\Hol(G)$), and set $\mao_{\rel}(G):=\mao(G)/|G|$ and $\maffo_{\rel}(G):=\maffo(G)/|G|$.

\begin{corrollary}\label{fCor}
For any finite group $G$ and any characteristic subgroup $N$ of $G$, we have:

(1) $\mao_{\rel}(G/N)\geq\mao_{\rel}(G)$ ($\mao_{\rel}$ is increasing on characteristic quotients).

(2) $\maffo_{\rel}(G/N)\geq\maffo_{\rel}(G)$ ($\maffo_{\rel}$ is increasing on characteristic quotients).
\end{corrollary}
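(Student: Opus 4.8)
The plan is to reduce both statements to a single multiplicative inequality, namely $\maffo(G)\le|N|\cdot\maffo(G/N)$ for part (2) and $\mao(G)\le|N|\cdot\mao(G/N)$ for part (1). Since $|G/N|=|G|/|N|$, dividing either inequality through by $|G|$ yields precisely $\maffo_{\rel}(G/N)\ge\maffo_{\rel}(G)$ (respectively the $\mao$ version): the right-hand side becomes $\maffo(G/N)/(|G|/|N|)=\maffo_{\rel}(G/N)$ and the left-hand side is $\maffo_{\rel}(G)$. So the two formulations are equivalent, and it suffices to establish the multiplicative forms.

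For part (2) I would take a bijective affine map $A=\A_{x,\alpha}$ of $G$ realizing $\ord(A)=\maffo(G)$ and apply Proposition \ref{lcmProp} to the characteristic subgroup $N$. This writes $\ord(A)=\ord(\tilde A)\cdot\lcm_{m\in M}\ord(\A_{m,\beta})$, where $\tilde A$ is the induced affine map on $G/N$, $\beta:=(\alpha_{\mid N})^{\ord(\tilde A)}$, and $M\subseteq N$. The first factor is the order of a bijective affine map of $G/N$, so $\ord(\tilde A)\le\maffo(G/N)$. For the second factor, note that $\beta$ is a genuine automorphism of $N$ (as $N$ is $\alpha$-invariant) and that $M\subseteq N$, whence $\lcm_{m\in M}\ord(\A_{m,\beta})\le\lcm_{x\in N}\ord(\A_{x,\beta})\le\mathfrak{F}(N)$, and Theorem \ref{fTheo} applied to $N$ gives $\mathfrak{F}(N)\le|N|$. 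Multiplying the two bounds yields $\maffo(G)\le\maffo(G/N)\cdot|N|$, as required.

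Part (1) runs along identical lines, restricted to affine maps with trivial translation part. Concretely, I would choose $\alpha\in\Aut(G)$ with $\ord(\alpha)=\mao(G)$ and apply the same argument to $A=\A_{1,\alpha}$, where $1$ denotes the identity of $G$; since $\A_{1,\alpha}=\alpha$, this map has order $\ord(\alpha)$. Here the induced map $\tilde A$ on $G/N$ is exactly the automorphism $\tilde\alpha$ of $G/N$ induced by $\alpha$ (which is well defined because $N$ is characteristic), so $\ord(\tilde A)=\ord(\tilde\alpha)\le\mao(G/N)$, while the fiber factor is bounded by $|N|$ precisely as before.

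The point that deserves the most care — and is really the crux — is the treatment of the second factor in part (1). Although $\alpha$ is a pure automorphism of $G$, Proposition \ref{lcmProp} produces translation parts $m\in M\subseteq N$ that are in general nontrivial, so the fiber contribution is governed not by the maximum automorphism order of $N$ but by the maximum order of an arbitrary bijective affine map counted in $\mathfrak{F}(N)$. Thus the automorphism-order bound (1) genuinely relies on the full strength of Theorem \ref{fTheo} for $N$, and not merely on a bound for $\mao(N)$; recognizing this is what lets the uniform argument cover both parts.
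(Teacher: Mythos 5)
Your argument is correct and is essentially the paper's own proof: both rely on Proposition \ref{lcmProp} to split the order of an extremal (affine) map into the quotient factor, bounded by $\mao(G/N)$ resp.\ $\maffo(G/N)$, and a fiber factor bounded by $\mathfrak{F}(N)\leq|N|$ via Theorem \ref{fTheo}, then divide by $|G|$. Your closing observation that part (1) needs the full strength of $\mathfrak{F}(N)\leq|N|$ rather than a bound on $\mao(N)$ is exactly the point implicit in the paper's use of $\mathfrak{F}(N)$ there.
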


\begin{proof}
For (1), fix an automorphism $\alpha$ of $G$ such that $\ord(\alpha)=\mao(G)$. In view of Proposition \ref{lcmProp} and Theorem \ref{fTheo}, we deduce that $\mao(G)=\ord(\alpha)\leq\ord(\tilde{\alpha})\cdot\mathfrak{F}(N)\leq\mao(G/N)\cdot |N|$, and the result follows upon dividing both sides of the inequality by $|G|$. The proof of (2) is analogous.
\end{proof}

Corollary \ref{fCor} extends \cite[Lemma 4.3]{Bor15c}, which dealt with the special case $N=\Rad(G)$, the solvable radical of $G$.

\section{On the proof of Theorem \ref{fTheo}}\label{sec2}

\subsection{Some auxiliary results}\label{subsec2P1}

In this subsection, we present some results used in the proof of Theorem \ref{fTheo}. We begin by restating \cite[Lemma 2.1.6]{Bor15b} for the readers' convenience:

\begin{lemmma}\label{divisorLem}
Let $G$ be a finite group, $x\in G$, $\alpha$ an automorphism of $G$. Then every cycle length of $\A_{x,\alpha}$ is divisible by $\LL_G(x,\alpha):=\ord(\sh_{\alpha}(x))\cdot\prod_{p}{p^{\nu_p(\ord(\alpha))}}$, where $p$ runs through the common prime divisors of $\ord(\sh_{\alpha}(x))$ and $\ord(\alpha)$. In particular, $\LL_G(x,\alpha)\mid|G|$.\qed
\end{lemmma}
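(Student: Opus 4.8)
The plan is to reduce the entire statement to one clean identity linking an arbitrary cycle length $\ell$ of $\A_{x,\alpha}$ to $\ord(\sh_\alpha(x))$ and $\ord(\alpha)$, and then to read off the divisibility prime by prime. First I would compute powers of the affine map: a direct induction gives $\A_{x,\alpha}^n=\A_{x\alpha(x)\cdots\alpha^{n-1}(x),\,\alpha^n}$. Setting $d:=\ord(\alpha)$, the automorphism part becomes trivial at $n=d$, so $\A_{x,\alpha}^d$ is the \emph{left translation} $\A_{s,\id}$ by $s:=x\alpha(x)\cdots\alpha^{d-1}(x)=\sh_\alpha(x)$. Writing $m:=\ord(s)$, this translation is completely transparent: its cycles are the right cosets of $\langle s\rangle$, so every one of them has length exactly $m$.

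Next I would invoke the elementary fact that if a permutation $\sigma$ has a cycle of length $\ell$ through a point $g$, then the orbit of $g$ under $\langle\sigma^d\rangle$ has length $\ell/\gcd(\ell,d)$. Applying this to $\sigma=\A_{x,\alpha}$ and combining it with the previous paragraph (the $\A_{x,\alpha}^d$-orbit of $g$ lies in the $\ell$-cycle and, being a cycle of the translation $\A_{s,\id}$, has length $m$) yields for every cycle length $\ell$ the identity
\[\ell=m\cdot\gcd(\ell,d).\]
This is the heart of the argument, and I expect it to be the main (if not deep) obstacle: one must notice that raising to the power $\ord(\alpha)$ is precisely what linearises the affine map into a pure translation, after which the rigid cycle structure of that translation pins $\ell$ down.

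It then remains to extract $\LL_G(x,\alpha)\mid\ell$ from this identity, which I would do by comparing $p$-adic valuations $\nu_p$. From $\ell=m\cdot\gcd(\ell,d)$ one reads off $m\mid\ell$ at once, and this already settles every prime not dividing both $m$ and $d$, for which $\nu_p(\LL_G(x,\alpha))=\nu_p(m)$. For a prime $p$ dividing both $m$ and $d$, put $a=\nu_p(\ell)$, $b=\nu_p(m)\ge1$, $c=\nu_p(d)\ge1$; the identity gives $a=b+\min(a,c)$, and since $b\ge1$ the alternative $\min(a,c)=a$ is impossible, forcing $\min(a,c)=c$ and hence $\nu_p(\ell)=\nu_p(m)+\nu_p(d)=\nu_p(\LL_G(x,\alpha))$. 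Thus $\LL_G(x,\alpha)$ divides every cycle length of $\A_{x,\alpha}$, and the final assertion follows immediately, since these cycle lengths are positive integers summing to $|G|$.
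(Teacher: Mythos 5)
Your proof is correct and complete: the computation $\A_{x,\alpha}^{\ord(\alpha)}=\A_{\sh_\alpha(x),\id}$, the observation that this translation has all cycles of length $m=\ord(\sh_\alpha(x))$, the resulting identity $\ell=m\cdot\gcd(\ell,\ord(\alpha))$ for every cycle length $\ell$, and the prime-by-prime extraction of $\LL_G(x,\alpha)\mid\ell$ are all sound, as is the final step summing cycle lengths to get $\LL_G(x,\alpha)\mid|G|$. Note that the paper itself gives no proof here --- the lemma is restated from \cite[Lemma 2.1.6]{Bor15b} with the proof omitted --- so there is nothing to compare against directly, but your argument is the natural one underlying that result and can stand as a self-contained justification.
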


We can use this to give some sufficient conditions for least common multiples as in the definition of $\mathfrak{F}(G)$ to be bounded by $|G|$:

\begin{lemmma}\label{lcmDivLem}
Let $G$ be a finite group, $\alpha\in\Aut(G)$.

(1) If $\ord(\alpha)\mid |G|$, then $\lcm_{x\in G}{\ord(\A_{x,\alpha})}\mid |G|$.

(2) For every prime $p\mid |G|$, we have \[\lcm_{x\in G}{\ord(\A_{x,\alpha})}\mid\prod_{q\mid|G|,q\not=p}{q^{\nu_q(|G|)}}\cdot p^{2\nu_p(\exp(G))}\cdot\exp(\Out(G)).\] In particular, if, for some prime $p\mid |G|$, we have \[p^{2\nu_p(\exp(G))}\cdot\exp(\Out(G))\leq p^{\nu_p(|G|)},\] then $\lcm_{x\in G}{\ord(A_{x,\alpha})}\leq |G|$.
\end{lemmma}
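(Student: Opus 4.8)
The plan is to reduce both parts to a single, easily verified divisibility, namely $\ord(\A_{x,\alpha})\mid\ord(\alpha)\cdot\ord(\sh_\alpha(x))$ for every $x$, and then to argue prime by prime. This divisibility is immediate: in $\Hol(G)$ one has $(x,\alpha)^{\ord(\alpha)}=(\sh_\alpha(x),\id)$, so $(x,\alpha)^{\ord(\alpha)\cdot\ord(\sh_\alpha(x))}=(1,\id)$. Writing $d:=\ord(\alpha)$ and $e_x:=\ord(\sh_\alpha(x))$, it therefore suffices to bound $\nu_q(\ord(\A_{x,\alpha}))\leq\nu_q(d)+\nu_q(e_x)$ for each prime $q$ and each $x$, and then take the maximum over $x$, since $\nu_q\bigl(\lcm_{x\in G}\ord(\A_{x,\alpha})\bigr)=\max_{x}\nu_q(\ord(\A_{x,\alpha}))$.

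The heart of the argument is a case distinction on whether $q\mid e_x$. If $q\mid e_x$, I would invoke Lemma~\ref{divisorLem}: the displayed formula for $\LL_G(x,\alpha)$ gives $\nu_q(\LL_G(x,\alpha))=\nu_q(e_x)+\nu_q(d)=\nu_q(d)+\nu_q(e_x)$ (when $q\mid d$ this picks up the full $q$-part of $d$, since $q$ is then a common prime divisor of $e_x$ and $d$; when $q\nmid d$ the term $\nu_q(d)$ simply vanishes), so that $\LL_G(x,\alpha)\mid|G|$ yields $\nu_q(d)+\nu_q(e_x)\leq\nu_q(|G|)$. If instead $q\nmid e_x$, then $\nu_q(d)+\nu_q(e_x)=\nu_q(d)$, and here I would bound $\nu_q(\ord(\alpha))$ using the structure of $\Aut(G)$: the image of $\alpha$ in $\Out(G)$ has some order $m\mid\exp(\Out(G))$, and $\alpha^m\in\Inn(G)\cong G/Z(G)$ has order $k\mid\exp(G/Z(G))\mid\exp(G)$, whence $\ord(\alpha)\mid mk$ and $\nu_q(d)\leq\nu_q(\exp(G))+\nu_q(\exp(\Out(G)))$.

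Combining the two cases, and using $\nu_q(e_x)\leq\nu_q(\exp(G))$ and $\exp(G)\mid|G|$, I obtain for every prime $q$ and every $x$ both a \emph{sharp} bound $\nu_q(\ord(\A_{x,\alpha}))\leq\nu_q(|G|)+\nu_q(\exp(\Out(G)))$ and a \emph{coarse} bound $\nu_q(\ord(\A_{x,\alpha}))\leq 2\nu_q(\exp(G))+\nu_q(\exp(\Out(G)))$ (the latter needs only the previous paragraph, discarding Lemma~\ref{divisorLem} and the case split entirely). Part (1) falls out at once: if $d\mid|G|$, then in the case $q\nmid e_x$ the hypothesis gives $\nu_q(d)\leq\nu_q(|G|)$ directly, so $\nu_q(\ord(\A_{x,\alpha}))\leq\nu_q(|G|)$ for all $q$ and $x$, and hence $\lcm_{x\in G}\ord(\A_{x,\alpha})\mid|G|$. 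For the divisibility in Part (2), I would match $q$-adic valuations with the claimed right-hand side: for $q\neq p$ its valuation is $\nu_q(|G|)+\nu_q(\exp(\Out(G)))$, covered by the sharp bound, while for $q=p$ its valuation is $2\nu_p(\exp(G))+\nu_p(\exp(\Out(G)))$, covered by the coarse bound. This is exactly why $p$ is singled out: at $p$ we deliberately use the coarse estimate so that the final hypothesis can absorb the stray factor $\exp(\Out(G))$.

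Finally, for the ``in particular'' clause I would rewrite the right-hand side as \[\frac{|G|}{p^{\nu_p(|G|)}}\cdot p^{2\nu_p(\exp(G))}\cdot\exp(\Out(G))=|G|\cdot\frac{p^{2\nu_p(\exp(G))}\cdot\exp(\Out(G))}{p^{\nu_p(|G|)}},\] so that the assumption $p^{2\nu_p(\exp(G))}\cdot\exp(\Out(G))\leq p^{\nu_p(|G|)}$ forces the trailing fraction to be at most $1$ and the whole product to be at most $|G|$. The only genuinely delicate step is the bound $\nu_q(\ord(\alpha))\leq\nu_q(\exp(G))+\nu_q(\exp(\Out(G)))$ via the $\Inn/\Out$ decomposition; the rest is bookkeeping with $q$-adic valuations, where the one thing to get right is the case split $q\mid e_x$ versus $q\nmid e_x$, since it is precisely on the primes dividing $\ord(\sh_\alpha(x))$ that Lemma~\ref{divisorLem} is strong enough to replace the exponent estimate by the better value $\nu_q(|G|)$.
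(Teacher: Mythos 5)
Your proof is correct and follows essentially the same route as the paper: both arguments reduce to comparing $q$-adic valuations of $\ord(\alpha)\cdot\ord(\sh_\alpha(x))$ with the target, invoke Lemma~\ref{divisorLem} exactly on the primes dividing $\ord(\sh_\alpha(x))$, and extract the $\exp(\Out(G))$ factor via the $\Inn$/$\Out$ decomposition of $\ord(\alpha)$. The only cosmetic difference is that you derive the divisibility $\ord(\A_{x,\alpha})\mid\ord(\alpha)\cdot\ord(\sh_\alpha(x))$ on the spot, whereas the paper uses the corresponding equality from its earlier references.
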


\begin{proof}
For (1): Fix $x\in G$. We will show that $\ord(\A_{x,\alpha})$. which equals $\ord(\alpha)\cdot\ord(\sh_{\alpha}(x))$, divides $|G|$. This is tantamount to proving that for any prime $p$, we have $\nu_p(\ord(\alpha))+\nu_p(\ord(\sh_{\alpha}(x)))\leq\nu_p(|G|)$. This is clear (\textit{inter alia} by assumption) if $p$ divides at most one of the two numbers $\ord(\alpha)$ and $\ord(\sh_{\alpha}(x))$, and if $p$ divides both these numbers, the inequality holds by Lemma \ref{divisorLem}.

For (2): Again, we fix $x\in G$. We shall prove that \[\ord(\alpha)\cdot\ord(\sh_{\alpha}(x))\mid\prod_{q\mid|G|,q\not=p}{q^{\nu_q(|G|)}}\cdot p^{2\nu_p(\exp(G))}\cdot\exp(\Out(G)).\] Denoting by $\pi:\Aut(G)\rightarrow\Out(G)$ the canonical projection and noting that $\ord(\alpha)=\ord(\pi(\alpha))\cdot\ord(\alpha^{\ord(\pi(\alpha))})$ with $\ord(\pi(\alpha))\mid\exp(\Out(G))$, we find that it is sufficient to prove that $\ord(\alpha^{\ord(\pi(\alpha))})\cdot\ord(\sh_{\alpha}(x))\mid\prod_{q\mid|G|,q\not=p}{q^{\nu_q(|G|)}}\cdot p^{2\nu_p(\exp(G))}$. Fix a prime $l$. If $l$ divides at most one of the numbers $\ord(\alpha^{\ord(\pi(\alpha))})$ and $\ord(\sh_{\alpha}(x))$, it is clear that the corresponding inequality of $l$-adic valuations holds. Hence assume that $l$ divides both these numbers. If $l\not=p$, we are done by an application of Lemma \ref{divisorLem}, and if $l=p$, we are done since both orders divide $p^{\nu_p(\exp(G))}$.
\end{proof}

In view of Lemma \ref{lcmDivLem}, the following well-known technique for bounding the $p$-exponent of a finite group, particularly of a finite group of Lie type with defining characteristic $p$, will be useful:

\begin{lemmma}\label{lieTypeLem}
Let $p$ be a prime, $K$ a field of characteristic $p$, $d\in\mathbb{N}^+$. Let $A\in\GL_d(K)$ be of finite order. Then $\nu_p(\ord(A))\leq\lceil\log_p(d)\rceil$. In particular, denoting by $d_p(G)$ the minimum faithful projective representation degree in characteristic $p$ of the finite group $G$, we have $\nu_p(\exp(G))\leq\lceil\log_p(d_p(G))\rceil$.\qed
\end{lemmma}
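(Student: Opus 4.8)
The plan is to prove this in two stages: first the matrix statement, then the deduction about $\nu_p(\exp(G))$.

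\medskip

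\noindent\textbf{The matrix bound.}
For the first assertion I would exploit the Jordan decomposition of $A\in\GL_d(K)$ over the algebraic closure $\overline{K}$. Since $A$ has finite order, it is diagonalizable over $\overline{K}$ if and only if its order is prime to $p$; the $p$-part of $\ord(A)$ is carried entirely by the unipotent part of the (multiplicative) Jordan decomposition $A=A_sA_u$, where $A_s$ is semisimple, $A_u$ is unipotent, and they commute. Thus $\nu_p(\ord(A))=\nu_p(\ord(A_u))$, and it suffices to bound the order of a unipotent element of $\GL_d(\overline{K})$. Writing $A_u=I+N$ with $N$ nilpotent, I would note $N^d=0$ since $N$ is a $d\times d$ nilpotent matrix, so its nilpotency index is at most $d$. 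A standard computation in characteristic $p$ gives $A_u^{p^k}=(I+N)^{p^k}=I+N^{p^k}$, because all intermediate binomial coefficients $\binom{p^k}{j}$ for $0<j<p^k$ are divisible by $p$ and hence vanish in $K$. Therefore $A_u^{p^k}=I$ as soon as $p^k\geq d$, that is, as soon as $k\geq\log_p(d)$; taking $k=\lceil\log_p(d)\rceil$ yields $\ord(A_u)\mid p^{\lceil\log_p(d)\rceil}$, whence $\nu_p(\ord(A))=\nu_p(\ord(A_u))\leq\lceil\log_p(d)\rceil$, as claimed.

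\medskip

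\noindent\textbf{The deduction for $\nu_p(\exp(G))$.}
For the ``In particular'' clause I would take an element $g\in G$ of maximal $p$-power order, so that $\nu_p(\ord(g))=\nu_p(\exp(G))$, and realize it inside a faithful projective representation of minimal degree $d_p(G)$ in characteristic $p$. By definition such a representation is a homomorphism $G\to\PGL_{d_p(G)}(\overline{\mathbb{F}_p})$ (or into some $\PGL$ over a field of characteristic $p$); lifting the image of $g$ to an element $A\in\GL_{d_p(G)}$ of finite order, the order of $g$ in $\PGL$ divides $\ord(A)$, and since the scalar kernel $\overline{\mathbb{F}_p}^\times$ has order prime to $p$, the full $p$-part of $\ord(g)$ survives the projection. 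Concretely, $\nu_p(\ord(g))\leq\nu_p(\ord(A))$, and the matrix bound just established gives $\nu_p(\ord(A))\leq\lceil\log_p(d_p(G))\rceil$. Combining, $\nu_p(\exp(G))=\nu_p(\ord(g))\leq\lceil\log_p(d_p(G))\rceil$.

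\medskip

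\noindent\textbf{Main obstacle.}
The only delicate point is the interaction between $\GL$ and $\PGL$ in the second stage: one must check that passing to the projective quotient cannot \emph{decrease} the $p$-adic valuation of the element's order, i.e.\ that the $p$-part is genuinely detected by the linear lift. This is where I would lean on the fact that the kernel of $\GL\to\PGL$ consists of scalar matrices, whose group of finite-order elements over a field of characteristic $p$ is a subgroup of the prime-to-$p$ group of roots of unity in $\overline{\mathbb{F}_p}^\times$; hence any $p$-element of $\PGL$ lifts to a $p$-element of $\GL$ of the same $p$-order. Everything else—the Jordan decomposition, the index-of-nilpotency bound, and the Frobenius-style binomial collapse $(I+N)^{p^k}=I+N^{p^k}$—is routine, so I expect this scalar-quotient bookkeeping to be the step warranting the most care.
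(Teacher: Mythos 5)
The paper states this lemma without proof (it is flagged as a well-known technique and closed with \qed), so there is no argument of the paper's to compare against; your write-up supplies the standard argument and it is correct. Two small remarks. First, the Jordan-decomposition detour can be shortened: writing $\ord(A)=p^a m$ with $p\nmid m$, the element $B=A^m$ satisfies $(B-I)^{p^a}=B^{p^a}-I=0$, so $B-I$ is nilpotent and $(B-I)^{d}=0$, giving $B^{p^k}=I$ once $p^k\geq d$ by the same binomial collapse -- this avoids passing to $\overline{K}$ and invoking semisimple/unipotent parts. Second, the ``delicate point'' you flag in the $\GL$/$\PGL$ step is not actually needed in the direction you use it: since the projective representation is faithful, the order of the image of $g$ in $\PGL$ equals $\ord(g)$ and divides the order of any finite-order lift $A$, so $\nu_p(\ord(g))\leq\nu_p(\ord(A))$ follows immediately; the prime-to-$p$ nature of the scalar kernel would only matter for the reverse inequality, which the lemma does not require. (The existence of a finite-order lift is automatic over $\overline{\mathbb{F}_p}$, as every matrix entry lies in some finite field.)
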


Finally, we note that the function $\mathfrak{F}$ satisfies an inequality which is useful for proofs by induction:

\begin{lemmma}\label{csubLem}
For all finite groups $G$ and $N \cha G$, we have $\mathfrak{F}(G)\leq\mathfrak{F}(N)\cdot\mathfrak{F}(G/N)$.
\end{lemmma}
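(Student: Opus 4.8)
The plan is to establish the submultiplicativity inequality
\[
\mathfrak{F}(G)\leq\mathfrak{F}(N)\cdot\mathfrak{F}(G/N)
\]
by unwinding the definition of $\mathfrak{F}$ and invoking Proposition \ref{lcmProp} to control each individual affine-map order. First I would fix an automorphism $\alpha\in\Aut(G)$ achieving the outer maximum in the definition of $\mathfrak{F}(G)$, so that $\mathfrak{F}(G)=\lcm_{x\in G}\ord(\A_{x,\alpha})$. Since $N\cha G$, the automorphism $\alpha$ preserves $N$, hence restricts to an automorphism $\alpha_{\mid N}\in\Aut(N)$ and descends to an induced automorphism $\tilde{\alpha}\in\Aut(G/N)$. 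The strategy is to bound $\ord(\A_{x,\alpha})$ for each $x\in G$ as a product of a contribution living in $G/N$ and a contribution living in $N$, and then to take the least common multiple over all $x$.

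The key step is the pointwise factorization coming from Proposition \ref{lcmProp}. For each $x\in G$, writing $A=\A_{x,\alpha}$ and letting $\tilde{A}=\A_{\tilde{x},\tilde{\alpha}}$ be the induced affine map on $G/N$, the proposition furnishes a subset $M_x\subseteq N$ with
\[
\ord(\A_{x,\alpha})=\ord(\tilde{A})\cdot\lcm_{m\in M_x}\ord\bigl(\A_{m,(\alpha_{\mid N})^{\ord(\tilde{A})}}\bigr).
\]
Here the first factor $\ord(\tilde{A})=\ord(\A_{\tilde{x},\tilde{\alpha}})$ divides $\lcm_{y\in G/N}\ord(\A_{y,\tilde{\alpha}})\leq\mathfrak{F}(G/N)$, since $\tilde{\alpha}$ is merely one automorphism of $G/N$ and $\mathfrak{F}(G/N)$ maximizes over all of $\Aut(G/N)$. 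For the second factor, each term $\ord(\A_{m,(\alpha_{\mid N})^{\ord(\tilde{A})}})$ is the order of a bijective affine map of $N$ whose linear part is the automorphism $(\alpha_{\mid N})^{\ord(\tilde{A})}\in\Aut(N)$; hence this term divides $\lcm_{n\in N}\ord(\A_{n,(\alpha_{\mid N})^{\ord(\tilde{A})}})\leq\mathfrak{F}(N)$. Taking the least common multiple over $m\in M_x$ preserves divisibility into $\mathfrak{F}(N)$, so the whole second factor divides $\mathfrak{F}(N)$.

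Combining these two divisibilities, each $\ord(\A_{x,\alpha})$ divides $\mathfrak{F}(G/N)\cdot\mathfrak{F}(N)$. Since this bound is uniform in $x$, the least common multiple over all $x\in G$ also divides $\mathfrak{F}(N)\cdot\mathfrak{F}(G/N)$, giving $\mathfrak{F}(G)=\lcm_{x\in G}\ord(\A_{x,\alpha})\leq\mathfrak{F}(N)\cdot\mathfrak{F}(G/N)$, as desired. The only subtle point — the part I would expect to require the most care — is the observation that although the linear part $(\alpha_{\mid N})^{\ord(\tilde{A})}$ of the relevant affine maps on $N$ depends on $x$ (through $\ord(\tilde{A})$), this causes no difficulty: for each fixed exponent the corresponding affine-map orders are bounded by $\mathfrak{F}(N)$ because $\mathfrak{F}(N)$ already maximizes over every automorphism of $N$, so no uniformity in the linear part is lost. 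Everything else is a routine passage from a pointwise divisibility to a statement about the least common multiple.
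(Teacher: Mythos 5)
There is a genuine gap, and it sits exactly at the point you flag as \enquote{the only subtle point} and then dismiss. Your argument gives, for each $x\in G$, a factorization $\ord(\A_{x,\alpha})=\ord(\tilde{A})\cdot C_x$ with $\ord(\tilde{A})\mid\lcm_{y\in G/N}\ord(\A_{y,\tilde{\alpha}})\leq\mathfrak{F}(G/N)$ and $C_x\mid\lcm_{n\in N}\ord(\A_{n,(\alpha_{\mid N})^{\ord(\tilde{A})}})\leq\mathfrak{F}(N)$. From this you may conclude $\ord(\A_{x,\alpha})\leq\mathfrak{F}(N)\cdot\mathfrak{F}(G/N)$ for each $x$, but \emph{not} that $\ord(\A_{x,\alpha})$ \emph{divides} $\mathfrak{F}(N)\cdot\mathfrak{F}(G/N)$: the quantities $\mathfrak{F}(N)$ and $\mathfrak{F}(G/N)$ are defined as \emph{maxima} over automorphisms, and a number bounded above by a maximum need not divide it. Since $\mathfrak{F}(G)$ is a least common multiple of the $\ord(\A_{x,\alpha})$, a uniform upper bound on the individual terms is not enough --- terms each at most $B$ can have least common multiple far larger than $B$. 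The concrete obstruction is that the linear part $(\alpha_{\mid N})^{\ord(\tilde{A})}$ of the $N$-level affine maps varies with $x$, so the $C_x$ are controlled by \emph{different} automorphisms of $N$; each $C_x\leq\mathfrak{F}(N)$ separately, but $\lcm_{x}C_x$ is not thereby bounded by $\mathfrak{F}(N)$.

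The paper repairs exactly this by uniformizing the exponent before descending to $N$: set $L_1:=\lcm_{y\in G/N}\ord(\A_{y,\tilde{\alpha}})$ and consider $\A_{x,\alpha}^{L_1}$ for \emph{all} $x$ simultaneously. Since $\ord(g)\mid k\cdot\ord(g^k)$ for any group element $g$ of finite order and any $k\in\mathbb{N}^+$, one gets $\mathfrak{F}(G)\mid L_1\cdot L_2$ with $L_2:=\lcm_{x\in G}\ord(\A_{x,\alpha}^{L_1})$; and now every $\ord(\A_{x,\alpha}^{L_1})$ is a least common multiple of orders of affine maps of $N$ all having the \emph{same} linear part $(\alpha_{\mid N})^{L_1}$, so $L_2$ divides $\lcm_{n\in N}\ord(\A_{n,(\alpha_{\mid N})^{L_1}})\leq\mathfrak{F}(N)$, while $L_1\leq\mathfrak{F}(G/N)$. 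Your proof becomes correct once you replace the $x$-dependent exponent $\ord(\tilde{A})$ by this single exponent $L_1$.
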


\begin{proof}
Fix an automorphism $\alpha$ of $G$ such that $\mathfrak{F}(G)=\lcm_{x\in G}{\ord(\A_{x,\alpha})}=:L$. Denote by $\tilde{\alpha}$ the automorphism of $G/N$ induced by $\alpha$, by $\pi:G\rightarrow G/N$ the canonical projection, and set $L_1:=\lcm_{y\in G/N}{\ord(\A_{y,\tilde{\alpha}})}$. Clearly, $L_1\leq\mathfrak{F}(G/N)$. On the other hand, setting $L_2:=\lcm_{x\in G}{\ord(\A_{x,\alpha}^{L_1})}$, since each $\ord(\A_{x,\alpha})$ divides $L_1\cdot L_2$, $L$ divides and thus is bounded from above by $L_1\cdot L_2$, so it suffices to show that $L_2\leq\mathfrak{F}(N)$. Now as in the proof of Proposition \ref{lcmProp}, each $\ord(\A_{x,\alpha}^{L_1})$ is a least common multiple of orders of bijective affine maps of $N$ of the form $\A_{n,(\alpha_{\mid n})^{L_1}}$ for various $n\in N$. But then $L_2$ itself is also a least common multiple of such orders, and thus bounded from above by $\mathfrak{F}(N)$, as we wanted to show.
\end{proof}

\subsection{Proof of Theorem \ref{fTheo}}\label{subsec2P2}

The proof is by induction on $|G|$, with the induction base $|G|=1$ being trivial. For the induction step, note that if $G$ is not characteristically simple, then fixing any proper nontrivial characteristic subgroup $N$ of $G$, we have, by Lemma \ref{csubLem} and the induction hypothesis, $\mathfrak{F}(G)\leq\mathfrak{F}(N)\cdot\mathfrak{F}(G/N)\leq |N|\cdot |G/N|=|G|$. Hence we may assume that $G$ is characteristically simple, i.e., $G=S^n$ for some finite (not necessarily nonabelian) simple group $S$ and $n\in\mathbb{N}^+$.

The case where $S$ is abelian, i.e., $S=\mathbb{Z}/p\mathbb{Z}$ for some prime $p$, is treated by \cite[Lemma 4.3]{Bor15c}, so we may assume that $S$ is nonabelian. Let us first treat the case $n\geq 2$. Note that by \cite[Lemma 3.4]{Bor15b} and \cite[Theorem 1]{Hor74a}, we have $\mao(S^n)<|S^n|^{0.438}$. Furthermore, $\exp(S^n)=\exp(S)\leq |S|\leq |S^n|^{0.5}$. It follows that $\lcm_{x\in S^n}{\ord(\A_{x,\alpha})}=\ord(\alpha)\cdot\lcm_{x\in S^n}{\ord(\sh_{\alpha}(x))}\leq |S^n|^{0.438}\cdot |S^n|^{0.5}<|S^n|$.

We may thus henceforth assume that $G=S$ is a nonabelian finite simple group. It is well-known that the Sylow $2$-subgroups of $S$ are not cyclic, whence we are done by Lemma \ref{lcmDivLem}(1) if $\exp(\Out(S))\leq 2$. This settles all alternating and all sporadic $S$.

Now assume that $S$ is of Lie type. We will treat this case mostly by applications of Lemma \ref{lcmDivLem}(2), with $p$ always equal to the defining characteristic of $S$. Hence our goal is to show the inequality $p^{2\nu_p(\exp(S))}\cdot\exp(\Out(S))\leq p^{\nu_p(|S|)}$, which we do by means of Lemma \ref{lieTypeLem}. Information on $|S|$ and $|\Out(S)|$ is available from \cite[p. xvi, Tables 5 and 6]{CCNPW85a}, and the values of $d_p(S)$ for the various finite simple groups of Lie type can be found in \cite[p. 200, Table 5.4.C]{KL90a}.

It is straightforward to verify the sufficient inequality $p^{2\lceil\log_p(d_p(S))\rceil}\cdot|\Out(S)|\leq p^{\nu_p(|S|)}$ for $S=\PSL_2(p^f)$ with $f\geq 3$, with the exception of the cases $(p,f)=(2,3),(3,3),(5,3)$, for $S=\PSL_d(q)$ with $d\geq 3$, with the exception of $(d,q)=(3,2),(3,4)$, and for all $S$ of Lie type which are not isomorphic with any $\PSL_d(q)$.

For $S=\PSL_2(p)$ with $p\geq 5$ or $S=\PSL_2(p^2)$ with $p\geq 3$, we note that $\exp(\Out(S))=2$, whence we are done as in the alternating and sporadic case. The same applies to $S=\PSL_3(2)$. Finally, one can check with GAP \cite{GAP4} that for $S=\PSL_2(8),\PSL_2(27),\PSL_2(125),\PSL_3(4)$, all automorphism orders of $S$ divide $|S|$, whence Lemma \ref{lcmDivLem}(1) can be applied to conclude the proof.\qed

\end{document}